
\documentclass{amsart}
%%%%%%%%%%%%%%%%%%%%%%%%%%%%%%%%%%%%%%%%%%%%%%%%%%%%%%%%%%%%%%%%%%%%%%%%%%%%%%%%%%%%%%%%%%%%%%%%%%%%%%%%%%%%%%%%%%%%%%%%%%%%%%%%%%%%%%%%%%%%%%%%%%%%%%%%%%%%%%%%%%%%%%%%%%%%%%%%%%%%%%%%%%%%%%%%%%%%%%%%%%%%%%%%%%%%%%%%%%%%%%%%%%%%%%%%%%%%%%%%%%%%%%%%%%%%
\usepackage{amsmath}
\usepackage{amssymb}

\setcounter{MaxMatrixCols}{10}
%TCIDATA{OutputFilter=Latex.dll}
%TCIDATA{Version=5.00.0.2606}
%TCIDATA{<META NAME="SaveForMode" CONTENT="1">}
%TCIDATA{BibliographyScheme=Manual}
%TCIDATA{LastRevised=Thursday, October 28, 2010 18:34:34}
%TCIDATA{<META NAME="GraphicsSave" CONTENT="32">}
\newtheorem{theorem}{Theorem}[section]
\newtheorem{lemma}[theorem]{Lemma}
\newtheorem{proposition}[theorem]{Proposition}
\newtheorem{corollary}[theorem]{Corollary}
\theoremstyle{definition}
\newtheorem{definition}[theorem]{Definition}
\newtheorem{example}[theorem]{Example}

\theoremstyle{remark}

\numberwithin{equation}{section}
\newcommand{\eps}{\varepsilon}

\setlength{\textwidth}{5in} \setlength{\textheight}{8in}

\begin{document}
\title[Fixed point results]{Some fixed point results in ordered partial metric spaces}
\author[H. Aydi
%,
%B. Samet
]
{
Hassen Aydi
%, Bessem Samet
} \maketitle

\begin{abstract}
In this paper, we establish some fixed point theorems in ordered partial metric spaces. An example is given to illustrate
our obtained results.

\end{abstract}

\vspace{.25cm}\noindent\textbf{2000 Mathematics Subject
Classification.} 47H10, 54H25.

\vspace{.25cm}\noindent \textbf{Key Words and Phrases}:
Fixed point, partial metric space, ordered set.

\section{\textbf{INTRODUCTION AND PRELIMINARIES}}
\noindent When fixed point problems in partially ordered metric spaces are concerned, first results were obtained by Ran and
Reurings \cite{RR}, and then by Nieto and L\'opez \cite{NL}. The following fixed point theorem was proved in
these papers.
 \begin{theorem}
 \label{main-thm0}\cite{NL, RR}
 Let $(X,\leq_X)$ be a partially ordered set and let d be a metric on X such that $(X,d)$ is a complete metric
space.
  Let $f: X\rightarrow X$ be a non-decreasing map with respect to $\leq_X$. Suppose that the
following conditions hold:\\
(i) $\exists 0 \leq c < 1$, \, $d(fx,fy)\leq c d(x, y)$ for any $y\leq_X
x$\\
(ii) $\exists  x_0 \in X$ such that $x_0\leq_X fx_0$\\
(iii) $f$ is continuous in $(X,d)$, or\\
(iii') if a non-decreasing sequence $\{x_n\}$ converges to $x\in
X$, then $x_n\leq_X x$ for all $n$.\\
Then $f$ has a fixed point $u\in X$.
\end{theorem}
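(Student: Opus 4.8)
The plan is to run the classical Picard iteration starting from the point $x_0$ supplied by condition (ii). First I would set $x_{n+1}=fx_n$ for $n\geq 0$ and show, by induction using that $f$ is non-decreasing and that $x_0\leq_X fx_0=x_1$, that the sequence $\{x_n\}$ is non-decreasing: if $x_{n-1}\leq_X x_n$ then $x_n=fx_{n-1}\leq_X fx_n=x_{n+1}$. In particular any two consecutive terms $x_n,x_{n+1}$ are comparable, so condition (i) applies to them and gives $d(x_{n+1},x_{n+2})=d(fx_n,fx_{n+1})\leq c\,d(x_n,x_{n+1})$, whence $d(x_n,x_{n+1})\leq c^{n}d(x_0,x_1)$ for all $n$.

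Next I would use the triangle inequality together with the convergence of the geometric series $\sum_n c^n$ (valid since $0\leq c<1$) to conclude that $\{x_n\}$ is Cauchy in $(X,d)$: for $m>n$ one has $d(x_n,x_m)\leq\sum_{k=n}^{m-1}c^{k}d(x_0,x_1)\leq\frac{c^{n}}{1-c}\,d(x_0,x_1)\to 0$. By completeness of $(X,d)$ there is $u\in X$ with $x_n\to u$.

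Then I would split into the two cases for the last hypothesis. If (iii) holds, continuity of $f$ gives $fu=f(\lim_n x_n)=\lim_n fx_n=\lim_n x_{n+1}=u$, so $u$ is a fixed point. If instead (iii') holds, then since $\{x_n\}$ is non-decreasing and converges to $u$ we obtain $x_n\leq_X u$ for every $n$; hence $x_n$ and $u$ are comparable and condition (i) yields $d(x_{n+1},fu)=d(fx_n,fu)\leq c\,d(x_n,u)\to 0$, so $x_{n+1}\to fu$. Since $x_{n+1}\to u$ as well and limits in a metric space are unique, we conclude $fu=u$.

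I do not expect a genuine obstacle here: the argument is the standard contraction-mapping scheme adapted to the ordered setting. The one point that needs care is that the contractive estimate in (i) is assumed only for comparable pairs, so every invocation of it — both along the iteration and in the limiting step under (iii') — must be justified by first exhibiting the relevant comparability, which is precisely why monotonicity of $f$ and the order-continuity hypothesis (iii') are needed.
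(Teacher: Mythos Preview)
Your argument is correct and is the standard proof of this classical result. Note, however, that the paper does not actually prove this theorem: it is quoted from \cite{NL, RR} as background, with no proof supplied in the present paper. So there is no ``paper's own proof'' to compare against here.

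If one instead compares with the paper's proof of its main result, Theorem~\ref{main-thm2} (the partial-metric, $\psi$-contractive generalization), the overall skeleton is the same as yours --- build the Picard sequence, show monotonicity, prove Cauchy, pass to the limit, then split on (iii) versus (iii'). The one substantive methodological difference is in the Cauchy step: you sum a geometric series, which works because the linear contraction $d(fx,fy)\leq c\,d(x,y)$ yields $d(x_n,x_{n+1})\leq c^n d(x_0,x_1)$. In the paper's $\psi$-setting no such rate is available, so the author instead uses an $\varepsilon$-ball trapping argument (choosing $n_0$ with $p(x_{n_0+1},x_{n_0})\leq\min\{\varepsilon/2,\psi(\varepsilon/2)\}$ and showing inductively that the orbit stays within $\varepsilon$ of $x_{n_0}$). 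Your simpler approach is exactly right for the linear hypothesis (i) stated here, and is what the original references do.
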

Results on weakly contractive mappings in such spaces were obtained by Harjani and Sadarangani in \cite{HS}.
An extension of the previous result is the following
\begin{theorem}
 \label{main-thm00}\cite{HS}
Let $(X,\leq)$ be a partially ordered set and suppose that there
exists a metric $d$ in $X$ such that $(X, d)$ is a complete metric
space. Let $f: X\rightarrow X$ be a non-decreasing
mapping with respect to $\leq_X$ such that
\[
d(fx, fy)\leq d(x, y)-\psi(d(x, y)),
\]
 for $y\leq_X x$,  where $\psi: [0,+\infty[\rightarrow [0,+\infty[$ is a continuous and non-decreasing function such that it is
positive in $]0,+\infty[$, $\psi(0)=0$ and
$\displaystyle\lim_{t\rightarrow +\infty}\psi(t)=+\infty$. Assume that\\
(i) $f$ is continuous in $(X,d)$, or\\
(ii) if a non-decreasing sequence $\{x_n\}$ converges to $x\in
X$, then $x_n\leq_X x$ for all $n$.\\
If
there exists $x_0 \in X$ with $x_0 \leq_X fx_0$, then $f$ has a fixed point.
\end{theorem}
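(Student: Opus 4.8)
The plan is to run the Picard iteration $x_{n+1}=fx_n$ starting from the point $x_0$ guaranteed by the hypothesis $x_0\leq_X fx_0$, and to show this sequence converges to a fixed point of $f$. First I would note that since $f$ is non-decreasing and $x_0\leq_X x_1$, an immediate induction gives $x_0\leq_X x_1\leq_X x_2\leq_X\cdots$, so $\{x_n\}$ is a non-decreasing chain; if $x_{n_0}=x_{n_0+1}$ for some $n_0$ we are already done, so I may assume $d(x_n,x_{n+1})>0$ for every $n$.

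Next I would apply the contractive hypothesis to the comparable pair $x_{n-1}\leq_X x_n$ to obtain $d(x_n,x_{n+1})\leq d(x_{n-1},x_n)-\psi(d(x_{n-1},x_n))\leq d(x_{n-1},x_n)$, so the sequence $\{d(x_n,x_{n+1})\}$ is non-increasing and bounded below, hence converges to some $r\geq 0$. Passing to the limit and using continuity of $\psi$ yields $r\leq r-\psi(r)$, forcing $\psi(r)=0$ and therefore $r=0$, since $\psi$ is positive on $]0,+\infty[$. Thus $d(x_n,x_{n+1})\to 0$.

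The crux of the argument, and the step I expect to be the main obstacle, is proving that $\{x_n\}$ is Cauchy. I would argue by contradiction: if it is not, there exist $\eps>0$ and indices $m_k>n_k\geq k$ with $d(x_{m_k},x_{n_k})\geq\eps$, where $m_k$ is chosen minimal with this property so that $d(x_{m_k-1},x_{n_k})<\eps$. The triangle inequality together with $d(x_{m_k},x_{m_k-1})\to 0$ then forces $d(x_{m_k},x_{n_k})\to\eps$, and a second use of the triangle inequality gives $d(x_{m_k-1},x_{n_k-1})\to\eps$ as well. Since $n_k-1\leq m_k-1$ the points $x_{n_k-1}$ and $x_{m_k-1}$ are comparable, so the contractive condition yields $d(x_{m_k},x_{n_k})\leq d(x_{m_k-1},x_{n_k-1})-\psi(d(x_{m_k-1},x_{n_k-1}))$; letting $k\to\infty$ and invoking continuity of $\psi$ produces $\eps\leq\eps-\psi(\eps)$, i.e. $\psi(\eps)\leq 0$, contradicting the positivity of $\psi$ on $]0,+\infty[$. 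Hence $\{x_n\}$ is Cauchy, and by completeness of $(X,d)$ it converges to some $u\in X$.

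It remains to check $fu=u$. Under hypothesis (i), continuity of $f$ gives $fu=\lim_n fx_n=\lim_n x_{n+1}=u$ directly. Under hypothesis (ii), the non-decreasing sequence $\{x_n\}$ converging to $u$ satisfies $x_n\leq_X u$ for all $n$, so the contractive inequality gives $d(x_{n+1},fu)=d(fx_n,fu)\leq d(x_n,u)-\psi(d(x_n,u))\leq d(x_n,u)\to 0$, whence $x_{n+1}\to fu$ and uniqueness of the limit yields $fu=u$. In both cases $u$ is a fixed point of $f$, which completes the proof.
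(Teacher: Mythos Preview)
Your argument is correct. Note first that the paper does not itself prove this theorem --- it is quoted from Harjani--Sadarangani \cite{HS} as background --- so there is no proof in the paper to compare against directly. The natural surrogate is the paper's proof of Theorem~\ref{main-thm2}, the partial-metric extension of this result, and there the overall outline (Picard iteration, monotone chain, $d(x_n,x_{n+1})\to 0$, Cauchy, then split on (i)/(ii)) matches yours, but the Cauchy step is handled by a genuinely different device.

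Where you run the standard subsequence-extraction contradiction (choose $m_k>n_k$ with $d(x_{m_k},x_{n_k})\geq\eps$ minimal, force both $d(x_{m_k},x_{n_k})$ and $d(x_{m_k-1},x_{n_k-1})$ to converge to $\eps$, then apply the contractive inequality and let $k\to\infty$), the paper instead proves a \emph{trapping} statement: given $\eps>0$, pick $n_0$ with $p(x_{n_0+1},x_{n_0})\leq\min\{\eps/2,\psi(\eps/2)\}$ and show that whenever $x_{n_0}\leq_X z$ and $p(z,x_{n_0})\leq\eps$ one has $p(fz,x_{n_0})\leq\eps$, by splitting into the cases $p(z,x_{n_0})\leq\eps/2$ and $\eps/2\leq p(z,x_{n_0})\leq\eps$; the second case uses the monotonicity of $\psi$ to bound $\psi(p(z,x_{n_0}))\geq\psi(\eps/2)$. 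Induction then confines all later iterates to the ball of radius $\eps$ about $x_{n_0}$.

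What each approach buys: the paper's trapping argument is the one in \cite{HS} and makes essential use of the hypothesis that $\psi$ is non-decreasing. Your route uses only the continuity of $\psi$ and its positivity on $]0,+\infty[$; neither the monotonicity of $\psi$ nor the condition $\lim_{t\to\infty}\psi(t)=+\infty$ is invoked anywhere, so your proof incidentally shows those two assumptions are superfluous for the conclusion.
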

 Many other results on the existence of fixed points  or common fixed
points in ordered metric spaces were given, we can cite for example
\cite{ALD, AS, beg, BL, ciric, lak, NaSa, OP, S, V} and the references therein.

In this paper we extend the results of Harjani and Sadarangani \cite{HS} to the case of partial metric spaces. An example is considered to illustrate our obtained results.

First, we start with some preliminaries on partial
metric spaces. For more details, we refer the reader to \cite{AE, ASS, Ma2, O1, O2, OV, Ro, Ro1, Ro2, S1, V}.
\begin{definition}
\label{partial} Let $X$ be a nonempty set. A partial metric on $X$ is a
function $p : X \times X\longrightarrow R_+$  such that for all
$x, y, z \in
X$:\\
\rm{(p1)} $x =y\Longleftrightarrow p(x, x) = p(x, y) = p(y, y)$,\\
\rm{(p2)} $p(x, x) \leq  p(x,y)$,\\
\rm{(p3)} $p(x, y)= p(y,x),$\\
\rm{(p4)} $p(x,y)\leq p(x,z)+ p(z,y) - p(z,z)$.\\
 A partial metric
space is a pair $(X, p)$ such that $X$ is a nonempty set and $p$
is a partial metric on $X$.
\end{definition}
If $p$ is a partial metric on $X$, then the function $p^s : X
\times X\longrightarrow R_+$ given by
\[
p^s(x, y) = 2p(x, y)- p(x, x)-p(y, y),
\]
 is a metric on $X$.
\begin{definition}
\label{c-c}
Let $(X, p)$ be a partial metric space. Then: \\
\rm{(i)} a sequence $\{x_n\}$ in a partial metric space $(X, p)$
converges to a point $x\in X$ if and only if $p(x, x) =
\displaystyle\lim_{n\longrightarrow +\infty} p(x,
x_n)$;\\
\rm{(ii)} a sequence $\{x_n\}$ in a partial metric space $(X, p)$
converges properly to a point $x\in X$ if and only if $p(x, x) =
\displaystyle\lim_{n\longrightarrow +\infty} p(x_n,
x_n)=\displaystyle\lim_{n\longrightarrow +\infty} p(x, x_n)$,  if
and only if $\displaystyle\lim_{n\longrightarrow +\infty} p^s(x,
x_n)=0$;\\
\rm{(iii)} A sequence $\{x_n\}$ in a partial metric space $(X, p)$
is called a Cauchy sequence if there
exists (and is finite) $\displaystyle\lim_{n,m\longrightarrow +\infty} p(x_n, x_m)$; \\
\rm{(iv)} A partial metric space $(X, p)$ is said to be complete
if every Cauchy sequence $\{x_n\}$ in $X$ converges to a point
$x\in X$, that is $p(x, x)
=\displaystyle\lim_{n,m\longrightarrow +\infty} p(x_n, x_m)$.\\
\end{definition}
\begin{lemma}
\label{lem1}
 Let $(X, p)$ be a partial metric space.\\
(a) $\{x_n\}$ is a Cauchy sequence in $(X, p)$ if and only if it
is a Cauchy sequence in the metric space $(X, p^s)$;\\
(b) A partial metric space $(X, p)$ is complete if and only if the
metric space $(X, p^s)$ is complete. Furthermore,
$\displaystyle\lim_{n\longrightarrow +\infty} p^s(x_n, x) = 0$ if
and only if
\[
p(x, x) = \displaystyle\lim_{n\longrightarrow +\infty} p(x_n, x)
=\displaystyle\lim_{n,m\longrightarrow +\infty} p(x_n, x_m).
\]
\end{lemma}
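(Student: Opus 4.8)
The proof rests on the algebraic identity
\[
p(x,y)=\frac{1}{2}\bigl(p^s(x,y)+p(x,x)+p(y,y)\bigr)
\]
together with two elementary consequences of axioms (p2) and (p4): the one-sided bound $0\le p(x,y)-p(y,y)\le p^s(x,y)$, and the two-sided bound $|p(x,x)-p(y,y)|\le 2\,p^s(x,y)$. The plan is to prove (a) first, then extract the \textup{``Furthermore''} clause of (b), and finally deduce the completeness equivalence from these two, matching up the precise formulations in Definition \ref{c-c}.

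For (a), the forward implication is immediate: if $\{x_n\}$ is Cauchy in $(X,p)$ then $a:=\lim_{n,m\to+\infty}p(x_n,x_m)$ exists and is finite, putting $n=m$ yields $p(x_n,x_n)\to a$, and hence $p^s(x_n,x_m)=2p(x_n,x_m)-p(x_n,x_n)-p(x_m,x_m)\to 0$, so $\{x_n\}$ is Cauchy in $(X,p^s)$. The converse is the heart of the matter. Assuming $\{x_n\}$ is Cauchy in $(X,p^s)$, I would first prove the estimate $|p(x_n,x_n)-p(x_m,x_m)|\le 2\,p^s(x_n,x_m)$: applying (p4) with $x=y=x_n$ and $z=x_m$ gives $p(x_n,x_n)\le 2p(x_n,x_m)-p(x_m,x_m)$, and combining this with $p(x_n,x_m)-p(x_m,x_m)\le p^s(x_n,x_m)$ (which follows from (p2)) gives the bound after symmetrising in $n,m$. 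Consequently $\{p(x_n,x_n)\}$ is a Cauchy sequence of real numbers, so it converges to some $a\ge 0$; feeding this and $p^s(x_n,x_m)\to 0$ into the identity above shows $p(x_n,x_m)\to a$, a finite limit, so $\{x_n\}$ is Cauchy in $(X,p)$.

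Next, the \textup{``Furthermore''} clause. If $p^s(x_n,x)\to 0$, then from $0\le p(x_n,x)-p(x,x)\le p^s(x_n,x)$ one gets $p(x_n,x)\to p(x,x)$; rewriting $p(x_n,x_n)=2p(x_n,x)-p(x,x)-p^s(x_n,x)$ gives $p(x_n,x_n)\to p(x,x)$; and $p^s(x_n,x_m)\le p^s(x_n,x)+p^s(x,x_m)\to 0$ together with the identity gives $p(x_n,x_m)\to p(x,x)$, which is exactly the asserted right-hand equalities. Conversely, if $p(x,x)=\lim_n p(x_n,x)=\lim_{n,m} p(x_n,x_m)$, then taking $n=m$ in the last limit gives $p(x_n,x_n)\to p(x,x)$, whence $p^s(x_n,x)=2p(x_n,x)-p(x_n,x_n)-p(x,x)\to 0$.

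Finally, the completeness equivalence follows formally. If $(X,p^s)$ is complete and $\{x_n\}$ is Cauchy in $(X,p)$, then by (a) it is Cauchy in $(X,p^s)$, hence $p^s(x_n,x)\to 0$ for some $x\in X$, and the \textup{``Furthermore''} clause yields $p(x,x)=\lim_n p(x_n,x)=\lim_{n,m}p(x_n,x_m)$, which is precisely convergence to $x$ in the sense of Definition \ref{c-c}(i) together with the completeness condition of Definition \ref{c-c}(iv); so $(X,p)$ is complete. The reverse implication is the mirror image: a $p^s$-Cauchy sequence is $p$-Cauchy by (a), its $p$-limit $x$ satisfies $p(x,x)=\lim_n p(x_n,x)=\lim_{n,m}p(x_n,x_m)$ by completeness of $(X,p)$, and the \textup{``Furthermore''} clause converts this back to $p^s(x_n,x)\to 0$. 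I expect the only genuinely non-routine point to be the estimate $|p(x_n,x_n)-p(x_m,x_m)|\le 2\,p^s(x_n,x_m)$ used in the converse half of (a); everything else is bookkeeping with the identity and the monotonicity axiom (p2).
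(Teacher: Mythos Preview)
Your argument is correct. The identity $p(x,y)=\tfrac12\bigl(p^s(x,y)+p(x,x)+p(y,y)\bigr)$ together with the consequences of (p2) and (p4) that you isolate does yield all three parts cleanly; the only minor remark is that the bound $|p(x_n,x_n)-p(x_m,x_m)|\le 2\,p^s(x_n,x_m)$ can be sharpened to $p^s(x_n,x_m)$ by using (p2) alone (from $p(x_n,x_n)\le p(x_n,x_m)$ one gets $p(x_n,x_n)-p(x_m,x_m)\le p(x_n,x_m)-p(x_m,x_m)\le p^s(x_n,x_m)$ directly), but your weaker estimate via (p4) is of course sufficient.

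As for comparison with the paper: there is nothing to compare against. Lemma~\ref{lem1} is stated in the preliminaries section without proof; it is a standard fact about partial metric spaces, and the paper is simply quoting it from the literature (see the references to Matthews, Oltra--Valero, Romaguera, etc.\ immediately preceding Definition~\ref{partial}). Your write-up is the usual proof of this folklore result and would serve perfectly well if the paper had chosen to include one.
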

\begin{definition}
\label{cont} Suppose that $(X_1,p_1)$ and $(X_2,p_2)$ are partial
metrics. Denote $\tau(p_1)$ and $\tau(p_2)$ their respective
topologies. We say $T:(X_1,p_1)\rightarrow (X_2,p_2)$ is
continuous if both $T:(X_1,\tau(p_1))\rightarrow (X_2,\tau(p_2))$
and $T:(X_1,\tau(p_1^s))\rightarrow (X_2,\tau(p^s_2))$ are
continuous.
\end{definition}
\begin{proposition}
\label{continuity} Let $(X,p)$ be a partial metric space,
partially ordered and $T: X\rightarrow X$ be a given mapping. We
say that $T$ is continuous in $x_0\in X$ if for every sequence
$\{x_n\}$ is $X$, we have\\
(a) $x_n$ converges to $x_0$ in $(X,p)$ implies $Tx_n$ converges to $Tx_0$ in $(X,p)$.\\
(b) $x_n$ converges properly to $x_0$ in $(X,p)$ implies $Tx_n$ converges properly to $Tx_0$ in $(X,p)$.\\
If $T$ is continuous on each point $x_0\in X$, then we say that
$T$ is continuous on $X$.
\end{proposition}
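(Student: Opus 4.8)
My reading of the statement labelled Proposition~\ref{continuity} is that, as worded, it is a \emph{definition} and not an assertion. The clause ``we say that $T$ is continuous in $x_0$ if \dots'' introduces the term \emph{continuity at a point} through the two sequential conditions (a) and (b), while the closing sentence introduces \emph{continuity on $X$} by quantifying that notion over all points. There is therefore no implication, inequality, or existence statement whose truth is to be established, so my plan is not to derive a conclusion but simply to record that terminology is being fixed; a proof in the logical sense is not called for.

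The only things I would check, and these by direct inspection, are that the two clauses are unambiguous. Condition (a) invokes convergence in $(X,p)$ and condition (b) invokes proper convergence in $(X,p)$, and both of these have already been pinned down in Definition~\ref{c-c}(i) and (ii); hence each clause carries a definite meaning and the compound requirement ``(a) and (b) hold for every sequence $\{x_n\}$'' is well posed. The global notion is then consistent with the pointwise one by construction, since $T$ is declared continuous on $X$ exactly when it is continuous at each $x_0 \in X$.

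The main point I would flag is that there is, correspondingly, no genuine obstacle to overcome, and the real temptation to be resisted is that of manufacturing content. In particular I would \emph{not} attempt to prove an equivalence between the present sequential conditions and the topological continuity of Definition~\ref{cont}, since no such equivalence is asserted in the statement as worded. The forward-looking conclusion is thus that the statement stands on its own as a definition: the appropriate treatment is to observe its definitional character and to note that the meaning of the convergence notions it employs is supplied entirely by Definition~\ref{c-c}.
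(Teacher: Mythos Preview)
Your reading is correct: the paper itself gives no proof of Proposition~\ref{continuity}, treating it purely as a definition of continuity for self-maps on a partial metric space. Your observation that the statement is definitional and that the convergence notions it uses are already supplied by Definition~\ref{c-c} matches the paper's own (implicit) handling exactly.
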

\begin{definition}
\label{increasing} If $(X,\leq_X)$ is a partially ordered set and
$f: X\rightarrow X$, we say that f is monotone nondecreasing if
$x, y \in X$, $x\leq_X y$ implies $fx \leq_X fy$.
\end{definition}

\section{Main results}
Our first result is the following theorem
\begin{theorem}
\label{main-thm2} Let $(X,\leq_X)$ be a partially ordered set and
let $p$ be a partial metric on $X$ such that $(X, p)$ is complete.
  Let $f: X\rightarrow X$ be a non-decreasing map with respect to $\leq_X$. Suppose that the
following conditions hold: for $y \leq x$, we have\\
(i)
 \begin{equation}
 \label{contraction1}
p(fx, fy)\leq p(x, y)-\psi(p(x, y)),
\end{equation}
where $\psi: [0,+\infty[\rightarrow [0,+\infty[$ is a continuous and non-decreasing function such that it is
positive in $]0,+\infty[$, $\psi(0)=0$ and
$\displaystyle\lim_{t\rightarrow +\infty}\psi(t)=+\infty$;\\
 (ii)
$\exists  x_0 \in X$ such that $x_0\leq_X fx_0$;\\
(iii) $f$ is continuous in $(X,p)$, or;\\
(iii') if a non-decreasing sequence $\{x_n\}$ converges to $x\in
X$, then $x_n\leq _X x$ for all $n$.\\
Then $f$ has a fixed point $u\in X$. Moreover, $p(u,u)=0$.
\end{theorem}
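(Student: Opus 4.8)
The plan is to study the Picard iteration $x_{n+1}=fx_n$ starting from the point $x_0$ of hypothesis (ii). Since $x_0\leq_X fx_0=x_1$ and $f$ is non-decreasing, an induction gives $x_n\leq_X x_{n+1}$ for all $n$, so $\{x_n\}$ is a non-decreasing sequence. If $x_{n_0}=x_{n_0+1}$ for some $n_0$, then $x_{n_0}$ is already a fixed point and, applying \eqref{contraction1} with $x=y=x_{n_0}$ together with (p2), one gets $p(x_{n_0},x_{n_0})=0$; so from now on I may assume $x_n\neq x_{n+1}$ for all $n$. Writing $p_n=p(x_n,x_{n+1})$ and applying \eqref{contraction1} with $x=x_n$, $y=x_{n-1}$ (legitimate since $x_{n-1}\leq_X x_n$) gives $p_n\leq p_{n-1}-\psi(p_{n-1})\leq p_{n-1}$; hence $\{p_n\}$ decreases to some $r\geq 0$, and passing to the limit with $\psi$ continuous yields $r\leq r-\psi(r)$, i.e. $\psi(r)=0$ and therefore $r=0$. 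Consequently $p(x_n,x_{n+1})\to 0$, and by (p2) also $p(x_n,x_n)\leq p(x_n,x_{n+1})\to 0$.

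The main work is to prove that $\{x_n\}$ is a Cauchy sequence; by Lemma \ref{lem1}(a) it suffices to show it is Cauchy in the metric space $(X,p^s)$. I would argue by contradiction: if not, there are $\eps>0$ and, for each $k$, indices $m_k>n_k\geq k$ with $p^s(x_{n_k},x_{m_k})\geq\eps$ and $m_k$ chosen minimal, so that $p^s(x_{n_k},x_{m_k-1})<\eps$. Using the triangle inequality for $p^s$ and the fact that $p^s(x_{m_k-1},x_{m_k})=2p_{m_k-1}-p(x_{m_k-1},x_{m_k-1})-p(x_{m_k},x_{m_k})\to 0$, one obtains $p^s(x_{n_k},x_{m_k})\to\eps$; then from $p(a,b)=\frac12\bigl(p^s(a,b)+p(a,a)+p(b,b)\bigr)$ together with $p(x_n,x_n)\to 0$ it follows that $p(x_{n_k},x_{m_k})\to\eps/2$. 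Now apply (p4) twice to get $p(x_{n_k},x_{m_k})\leq p_{n_k}+p(x_{n_k+1},x_{m_k+1})+p_{m_k}$, and bound $p(x_{n_k+1},x_{m_k+1})=p(fx_{m_k},fx_{n_k})$ via \eqref{contraction1} (valid since $x_{n_k}\leq_X x_{m_k}$) by $p(x_{m_k},x_{n_k})-\psi(p(x_{m_k},x_{n_k}))$. This rearranges to $\psi\bigl(p(x_{m_k},x_{n_k})\bigr)\leq p_{n_k}+p_{m_k}\to 0$, while the left-hand side tends to $\psi(\eps/2)>0$, a contradiction. Hence $\{x_n\}$ is Cauchy.

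Since $(X,p)$ is complete, Lemma \ref{lem1}(b) provides $u\in X$ with $\lim_n p^s(x_n,u)=0$, equivalently $p(u,u)=\lim_n p(x_n,u)=\lim_{n,m}p(x_n,x_m)$; and since $p(x_n,x_n)\to 0$ forces $p(x_n,x_m)\to 0$ along a Cauchy sequence, we get $p(u,u)=0=\lim_n p(x_n,u)$. It remains to show $fu=u$. Under (iii), $f$ is continuous and $x_n\to u$ properly (as $p^s(x_n,u)\to 0$), so by Proposition \ref{continuity}(b) the sequence $x_{n+1}=fx_n$ converges properly to $fu$, i.e. $p^s(x_{n+1},fu)\to 0$; since also $p^s(x_{n+1},u)\to 0$ and limits in the metric space $(X,p^s)$ are unique, $fu=u$. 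Under (iii'), the non-decreasing sequence $\{x_n\}$ converges to $u$, hence $x_n\leq_X u$ for every $n$; applying \eqref{contraction1} with $x=u$, $y=x_n$ gives $p(fu,x_{n+1})\leq p(u,x_n)$, and then (p4) yields $p(fu,u)\leq p(fu,x_{n+1})+p(x_{n+1},u)\leq p(u,x_n)+p(x_{n+1},u)\to 0$, so $p(fu,u)=0$; by (p2) also $p(fu,fu)=0$, and since $p(u,u)=0$ as well, (p1) gives $fu=u$. In either case $u$ is a fixed point with $p(u,u)=0$. I expect the Cauchy step to be the only genuinely delicate point; the rest is bookkeeping with the axioms (p1)--(p4) and the lemmas recorded above.
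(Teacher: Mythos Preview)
Your proof is correct and follows the same overall architecture as the paper: build the Picard sequence, show $p(x_n,x_{n+1})\to 0$, establish the Cauchy property, extract a limit $u$ with $p(u,u)=0$, and split into cases (iii)/(iii') to get $fu=u$. The one genuine methodological difference is in the Cauchy step. You argue by contradiction, extracting subsequences with $p^s(x_{n_k},x_{m_k})\to\eps$, converting to $p(x_{n_k},x_{m_k})\to\eps/2$, and then squeezing $\psi(p(x_{n_k},x_{m_k}))$ between something tending to $\psi(\eps/2)>0$ and something tending to $0$. The paper instead gives a direct inductive argument in the partial metric: given $\eps>0$, it picks $n_0$ with $p(x_{n_0+1},x_{n_0})\leq\min\{\eps/2,\psi(\eps/2)\}$ and shows, by a two-case computation (depending on whether $p(z,x_{n_0})\leq\eps/2$ or $\eps/2\leq p(z,x_{n_0})\leq\eps$), that the ball $\{z: x_{n_0}\leq_X z,\ p(z,x_{n_0})\leq\eps\}$ is $f$-invariant, whence $p(x_n,x_{n_0})\leq\eps$ for all $n\geq n_0$. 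Your subsequence argument is the more common device in the weak-contraction literature and transfers mechanically to many variants; the paper's invariant-ball argument, inherited from Harjani--Sadarangani, is constructive, avoids the detour through $p^s$, and makes visible where the monotonicity of $\psi$ is used (your proof uses only continuity of $\psi$ and positivity on $(0,\infty)$ in the Cauchy step). Both are valid here.
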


\begin{proof}
%We take the same sequence $\{x_n\}$ given in the proof of Theorem \ref{main-thm1}. Recall that $x_n=f^n x_0$ and $x_{n}\leq_X x_{n+1}$ for any $n\in \mathbb{N}$.
Let $x_0\in X$ be such that $x_0\leq_X fx_0$. As
$f$ is monotone non-decreasing, then
\[
x_0\leq_X fx_0\leq_X f^2x_0\leq_X f^3 x_0\leq_X...\leq_X f^n x_0\leq_X f^{n+1} x_0\leq_X...
\]
Put $x_n=f^n x_0$, then for any $n\in \mathbb{N}^*$, we have $x_{n-1}\leq_X  x_n$.
Then for each integer $n\geq 1$, from (\ref{contraction1}) and, as the elements $x_n$ and $x_{n-1}$ are comparable, we get
\begin{equation}
 \label{eq6}
p(x_{n+1},x_n)=p(fx_n, fx_{n-1})\leq p(x_n, x_{n-1})-\psi(p(x_n, x_{n-1})),
\end{equation}
If there exists $n_1 \in \mathbb{N}^*$ such that $p(x_{n_1}, x_{n_1-1})= 0$, then $x_{n_1-1}=x_{n_1}=
fx_{n_1-1}$ and $x_{n_1-1}$ is a fixed point of $f$ and the proof is
finished. In other case, suppose that $p(x_{n+1},x_n)\neq  0$ for all $n\in \mathbb{N}$. Then, using an assumption on $\psi$ in (\ref{eq6}) we have for $n\in \mathbb{N}^*$
\[
p(x_{n+1},x_n)\leq p(x_n, x_{n-1})-\psi(p(x_n, x_{n-1}))<p(x_n, x_{n-1}).
\]
Put $\rho_n=p(x_{n+1},x_n)$, then we have
\begin{equation}
 \label{eq7}
\rho_n\leq \rho_{n-1}-\psi(\rho_{n-1})<\rho_{n-1}.
\end{equation}
Therefore $\{\rho_n\}$ is a nonnegative non-increasing sequence and hence possesses a limit $\rho^*$. From (\ref{eq7}), taking limit when $n\rightarrow +\infty$,
we get
\[
\rho^*\leq \rho^*-\psi(\rho^*),
\]
and, consequently,  $\psi(\rho^*)= 0$. By our assumptions on $\psi$, we conclude $\rho^*= 0$, that is,
\begin{equation}
\label{77}
\lim_{n\rightarrow+\infty} p(x_n,x_{n+1})=0.
\end{equation}
In what follows we shall show that $\{x_n\}$ is a Cauchy sequence in the partial metric space $(X,p)$. Fix $\eps>0$, as $\rho_n=p(x_{n+1},x_n)\rightarrow 0$, there exists $n_0\in \mathbb{N}$ such that
\begin{equation}
 \label{eq8}
p(x_{n_0+1},x_{n_0})\leq \min\{\frac{\eps}{2},\psi(\frac{\eps}{2})\}.
\end{equation}
We claim that if $z\in X$ verifies $p(z,x_{n_0})\leq \eps$ and $x_{n_0}\leq_X z$, we get $p(fz,x_{n_0})\leq \eps$. Indeed, to do this we distinguish two cases :\\
Case 1. $p(z,x_{n_0})\leq \frac{\eps}{2}$.
In this case, as $z$ and $x_{n_0}$ are comparable, we have
\[
\begin{split}
p(fz,x_{n_0})\leq& p(fz,fx_{n_0})+p(fx_{n_0},x_{n_0})\\
= &p(fz,fx_{n_0})+p(x_{n_0+1},x_{n_0})\\
\leq& p(z,x_{n_0})-\psi(p(z,x_{n_0}))+p(x_{n_0+1},x_{n_0})\\
\leq&  p(z,x_{n_0})+p(x_{n_0+1},x_{n_0})\\
\leq& \frac{\eps}{2}+\frac{\eps}{2}=\eps.
\end{split}
\]
Case 2. $\frac{\eps}{2}\leq p(z,x_{n_0})\leq \eps$.
In this case, as  $\psi$ is a non-decreasing function, $\psi(\frac{\eps}{2})\leq \psi(p(z,x_{n_0}))$. Therefore,  from (\ref{eq8}) we have
\[
\begin{split}
p(fz,x_{n_0})\leq& p(fz,fx_{n_0})+p(fx_{n_0},x_{n_0})\\
= &p(fz,fx_{n_0})+p(x_{n_0+1},x_{n_0})\\
\leq& p(z,x_{n_0})-\psi(p(z,x_{n_0}))+p(x_{n_0+1},x_{n_0})\\
\le& p(z,x_{n_0})-\psi(\frac{\eps}{2})+p(x_{n_0+1},x_{n_0})\\
\leq& p(z,x_{n_0})-\psi(\frac{\eps}{2})+\psi( \frac{\eps}{2})\\
\leq& p(z,x_{n_0})\leq \eps.
\end{split}
\]
This proves the claim. As $x_{n_0+1}$ verifies  $p(x_{n_0+1},x_{n_0})\leq \eps$ and $x_{n_0}\leq_X x_{n_0+1}$, the claim gives us that
$x_{n_0+2}=fx_{n_0+1}$ verifies $ p(x_{n_0+2},x_{n_0})\leq \eps$. We repeat this process to get
\[
p(x_n,x_{n_0})\leq \eps\quad\mbox{for any}\quad n\geq n_{0}.
\]
It follows that
\[
p(x_n,x_{m})\leq p(x_n,x_{n_0})+p(x_{n_0},x_m)\leq\eps+\eps=2\eps\quad\mbox{for any}\quad n,m\geq n_{0}.
\]
Since $\eps$ is arbitrary, then $\displaystyle\lim_{n,m\rightarrow +\infty}p(x_n,x_m)=0$. Thus, $\{x_n\}$ is a Cauchy sequence in $(X,p)$, so by Lemma \ref{lem1}, $\{x_n\}$ is a Cauchy sequence in the metric space $(X,p^s)$. Since $(X,p)$ is complete, hence $(X,p^s)$ is complete, so there exists $u\in X$ such that
\begin{equation}
\label{eq9}
\displaystyle\lim_{n\rightarrow +\infty} p^s(x_n,u)=0.
\end{equation}
Thus, by Lemma \ref{lem1}, from the condition (p2) and (\ref{77}), we get
\begin{equation}
\label{eq10}
p(u,u)=\displaystyle\lim_{n\rightarrow +\infty} p(x_n,u)=\displaystyle\lim_{n\rightarrow +\infty} p(x_n,x_n)=0.
\end{equation}
We prove now that $fu=u$.
 We shall distinguish the cases (iii) and (iii') of the Theorem
\ref{main-thm2}.\\
Case 1. Suppose that the mapping $f$ is continuous. In particular,
thanks to condition $(b)$ in proposition \ref{continuity}, we have
$fx_{n}$ converges properly to $fu$ in $(X,p)$, that is
$p^s(fx_{n},fu)\longrightarrow 0$, since
$p^s(x_{n},u)\longrightarrow 0$, i.e, $\{x_{n}\}$ converges
properly to $u$ in $(X,p)$. Hence we have $\{fx_{n}\}$ converges to $fu$
in $(X,p^s)$. On the other hand, $\{fx_{n}=x_{n+1}\}$ converges to
$u$ in $(X,p^s)$ because of (\ref{eq9}). By uniqueness of the
limit in metric space $(X,p^s)$, we deduce that $fu=u$.\\
Case 2. Suppose now that the condition $(iii')$ of the theorem holds.
The sequence $\{x_n\}$ is non-decreasing with respect to $\leq_X$,
and it follows that $x_n\leq_X u$. Take $x=x_{n}$ and $y=u$
(which are comparable) in (\ref{contraction1}) to obtain that
\begin{equation}
\label{eq55}
p(u, fu) \leq p(u, x_{n+1}) + p(x_{n+1}, fu) \leq p(u, x_{n+1}) + p(u, x_n)-\psi(p(u, x_n)).
%p(fu,x_{n+1})=p(fu,fx_{n})\leq
%p(u,x_{n})-\psi(p(u,x_n)).
\end{equation}
 Letting $n\rightarrow +\infty$ in (\ref{eq55})
we find using (\ref{eq10}) and the properties of $\psi$ that
\[
p(fu,u)\leq 0-\psi(0)=0,
\]
hence $p(fu,u)=0$, so $fu=u$. This completes the proof of Theorem
\ref{main-thm2}.
\end{proof}
\begin{corollary}
 \label{cor1}
 Let $(X,\leq_X)$ be a partially ordered set and let $p$ be a partial metric on $X$ such that $(X, p)$ is complete.
  Let $f: X\rightarrow X$ be a non-decreasing map with respect to $\leq_X$. Suppose that the
following conditions hold:\\
(i) $\exists 0\leq c < 1$ such that
\begin{equation}
\label{contraction}
p(fx,fy)\leq c p(x, y)\quad\mbox{for any}\quad y\leq_X
x.
\end{equation}
(ii) $\exists  x_0 \in X$ such that $x_0\leq_X fx_0$;\\
(iii) $f$ is continuous in $(X,p)$, or;\\
(iii') if a non-decreasing sequence $\{x_n\}$ converges to $x\in
X$, then $x_n\leq_X x$ for all $n$.\\
Then $f$ has a fixed point $u\in X$. Moreover, $p(u,u)=0$.
\end{corollary}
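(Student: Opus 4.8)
The plan is to obtain Corollary \ref{cor1} as an immediate consequence of Theorem \ref{main-thm2} by specializing the comparison function $\psi$. Concretely, I would set $\psi(t) = (1-c)\,t$ for $t \in [0,+\infty[$, and then simply check that with this choice the hypotheses of Corollary \ref{cor1} reduce to those of Theorem \ref{main-thm2}.

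First I would verify that this $\psi$ meets all the requirements imposed in Theorem \ref{main-thm2}. Since $0 \leq c < 1$, the constant $1-c$ lies in $]0,1]$; hence $\psi$ is continuous and non-decreasing (in fact strictly increasing), $\psi(0)=0$, $\psi(t) = (1-c)\,t > 0$ for every $t \in\, ]0,+\infty[$, and $\displaystyle\lim_{t\to+\infty}\psi(t) = \lim_{t\to+\infty}(1-c)\,t = +\infty$. Thus $\psi$ is an admissible function in the sense of condition (i) of Theorem \ref{main-thm2}.

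Next I would rewrite the contraction hypothesis (\ref{contraction}): for $y \leq_X x$,
\[
p(fx,fy) \leq c\,p(x,y) = p(x,y) - (1-c)\,p(x,y) = p(x,y) - \psi\bigl(p(x,y)\bigr),
\]
which is precisely inequality (\ref{contraction1}) for the chosen $\psi$. The remaining hypotheses of Corollary \ref{cor1}, namely that $f$ is non-decreasing with respect to $\leq_X$, the existence of $x_0 \in X$ with $x_0 \leq_X fx_0$, and the alternative (iii)/(iii'), are identical to those of Theorem \ref{main-thm2}. Applying Theorem \ref{main-thm2} then yields a fixed point $u \in X$ of $f$ with $p(u,u) = 0$, which is exactly the assertion of the corollary.

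There is essentially no obstacle in this argument; the one point deserving a moment's care is that the \emph{strict} inequality $c < 1$ is exactly what guarantees that $\psi$ is positive on $]0,+\infty[$ and that $\psi(t) \to +\infty$ as $t \to +\infty$, so the corollary genuinely relies on $c<1$ rather than merely $c \leq 1$.
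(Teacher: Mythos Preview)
Your proposal is correct and follows exactly the paper's own approach: the paper's proof consists of the single line ``We take $\psi(t)=(1-c)t$ in Theorem \ref{main-thm2}.'' Your version simply spells out the routine verifications that this $\psi$ satisfies the hypotheses of Theorem \ref{main-thm2} and that (\ref{contraction}) becomes (\ref{contraction1}), which is entirely appropriate.
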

{\bf Proof.} We take $\psi(t)=(1-c)t$ in Theorem \ref{main-thm2}. \\

Next theorem gives a sufficient condition for the uniqueness of the fixed point.
\begin{theorem}
\label{main-thm3}
 Let all the conditions of Theorem  \ref{main-thm2} be
fulfilled and let the following condition be satisfied: for arbitrary two
points $x, y \in X$ there exists $z \in X$ which is comparable
with both $x$ and $y$. Then the fixed point of $f$ is unique.
\end{theorem}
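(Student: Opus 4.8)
The plan is to exploit the extra comparability hypothesis in the same spirit as the proof of Theorem \ref{main-thm2}. Suppose $u$ and $v$ are both fixed points of $f$. It suffices to prove $p(u,v)=0$: by (p2) this forces $p(u,u)\le p(u,v)=0$ and $p(v,v)\le p(u,v)=0$, so $p(u,u)=p(u,v)=p(v,v)=0$ and (p1) then gives $u=v$. (Incidentally, taking $x=y=v$ in (\ref{contraction1}) reconfirms $p(v,v)=0$ for any fixed point $v$, since $\psi(p(v,v))=0$.)

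First I would dispose of the case in which $u$ and $v$ happen to be comparable. Then $x=u$, $y=v$ is an admissible pair in (\ref{contraction1}), so $p(u,v)=p(fu,fv)\le p(u,v)-\psi(p(u,v))$, whence $\psi(p(u,v))=0$, and the hypotheses on $\psi$ give $p(u,v)=0$, as desired.

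In the general case, use the hypothesis to choose $z\in X$ comparable with both $u$ and $v$. Since $f$ is non-decreasing and $u=fu$, $v=fv$, comparability of $z$ with $u$ propagates: $f^nz$ is comparable with $u$ for every $n$ (by induction), and likewise $f^nz$ is comparable with $v$ for every $n$. Now I would run the scalar argument from the proof of Theorem \ref{main-thm2} on the sequences $\gamma_n:=p(f^nz,u)$ and $\delta_n:=p(f^nz,v)$. Applying (\ref{contraction1}) to the comparable pair $f^{n-1}z,\,u$ gives
\[
\gamma_n=p\bigl(f(f^{n-1}z),\,fu\bigr)\le \gamma_{n-1}-\psi(\gamma_{n-1})\le \gamma_{n-1},
\]
so $\{\gamma_n\}$ is non-negative and non-increasing, hence converges to some $\gamma^\ast\ge 0$; letting $n\to+\infty$ yields $\gamma^\ast\le\gamma^\ast-\psi(\gamma^\ast)$, so $\psi(\gamma^\ast)=0$ and thus $\gamma^\ast=0$. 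The identical argument gives $\delta_n\to 0$. Finally, by (p4) and (p2),
\[
p(u,v)\le p(u,f^nz)+p(f^nz,v)-p(f^nz,f^nz)\le \gamma_n+\delta_n\longrightarrow 0,
\]
so $p(u,v)=0$ and hence $u=v$.

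The only delicate point is the bookkeeping with comparability: ``comparable'' is not a transitive relation, so $u$ and $v$ themselves need not be comparable even though each is comparable with $z$; the role of the monotonicity of $f$ is precisely to guarantee that the iterates $f^nz$ remain comparable with each of the two fixed points, which is what licenses the use of (\ref{contraction1}) at each step. Beyond that, every estimate is a direct transcription of what was already established for Theorem \ref{main-thm2}, together with the elementary observation that $p(a,b)=0$ implies $a=b$ in a partial metric space.
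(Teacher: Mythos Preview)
Your proof is correct and follows essentially the same route as the paper: first handle comparable fixed points directly via (\ref{contraction1}), then in the non-comparable case iterate $f$ on an auxiliary point $z$ comparable with both, use monotonicity of $f$ to keep $f^nz$ comparable with each fixed point, and pass to the limit in the resulting non-increasing sequences $p(u,f^nz)$, $p(v,f^nz)$. You are in fact slightly more careful than the paper in two places: you spell out why $p(u,v)=0$ forces $u=v$ via (p1)--(p2), and you keep the $-p(f^nz,f^nz)$ term from (p4) explicit rather than silently dropping it.
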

\begin{proof}
 Let $u$ and $v$ be two fixed points of
$f$, i.e., $fu=u$ and $fv=v$. Consider the following two cases:\\
Case 1. $u$ and $v$ are comparable. Then we can apply condition
(\ref{contraction1}) and obtain that
\[
p(u,v)=p(fu,fv)\leq p(u,v)-\psi(p(u,v)),
\]
hence $\psi(p(u,v))\leq 0$, i,e, $p(u,v)=0$, so $u=v$, that is
the uniqueness of the fixed point of $f$.\\
Case 2. Suppose now that $u$ and $v$ are not comparable. Choose an
element $w \in X$ comparable with both of them. Then also, $u = f^n
u$ is comparable with $f^n w$ for each $n$ (since $f$ is
non-decreasing). Applying (\ref{contraction1}), one obtains for $n\in \mathbb{N}^*$ that
\[
\begin{split}
p(u,f^n w)&=p(ff^{n-1}u,ff^{n-1}w)\leq p(f^{n-1}u,f^{n-1}w)-\psi(p(f^{n-1}u,f^{n-1}w))\\
& \leq p(f^{n-1}u,f^{n-1}w)= p(u,f^{n-1}w).
\end{split}
\]
 It follows
that the sequence $\{p(u, f^nw)\}$ is non-increasing and it has a
limit $l\geq 0$. Assuming that $l > 0$ and passing to the limit in
the relation
\[
p(u,f^n w)\leq p(u,f^{n-1}w)-\psi(p(u,f^{n-1}w)),
\]
one obtains that $l=0$, a contradiction. In the same way it can be
deduced that $p(v,f^n w) \rightarrow 0$ as $n\rightarrow +\infty$.
Now, passing to the limit in $p(u, v)\leq  p(u, f^n w)+ p(f^n w,
v)$, it follows that $p(u,v)=0$, so $u=v$, and the uniqueness of
the fixed point is proved.
\end{proof}
\begin{example}
Let $X=[0,+\infty[$ endowed with the usual partial metric $p$
defined by $p:X\times X\rightarrow [0,+\infty[$ with
$p(x,y)=\max\{x,y\}$. We give the partial order on $X$ by
\[
x\leq_X y\Longleftrightarrow p(x,x)=p(x,y)\Longleftrightarrow
x=\max\{x,y\}\Longleftarrow y\leq x.
\]
 It is clear  that $(X,\leq_X)$ is totally ordered. The partial metric space
 $(X,p)$ is complete because $(X,p^s)$ is complete. Indeed,
 for any $x,y\in X$,
 \[
 \begin{split}
 p^s(x,y)=2p(x,y)-p(x,x)-p(y,y)=&2\max\{x,y\}-(x+y)\\
 =& |x-y|,
 \end{split}
 \]
 Thus, $(X,p^s)=([0,+\infty[,|.|)$  is the usual metric space, which is complete.
Again, we define
\[
f(t)=\frac{1}{2}t,\quad \mbox{if}\quad t\geq 0.
\]
The function $f$ is continuous on $(X,p)$. Indeed, let $\{x_n\}$ be a sequence
 converging to $x$ in $(X,p)$, then
\[
\lim_{n\rightarrow +\infty}\max\{x_n,x\}=\lim_{n\rightarrow +\infty} p(x_n,x)=p(x,x)=x
\]
hence by definition of $f$, we have
\begin{equation}
\label{eq32}
\lim_{n\rightarrow +\infty} p(fx_n,fx)=\lim_{n\rightarrow +\infty}\max\{fx_n,fx\}=\lim_{n\rightarrow +\infty}\frac{1}{2}\max\{x_n,x\}=\frac{1}{2}x=p(fx,fx),
\end{equation}
that is $\{f(x_n)\}$ converges to $f(x)$ in $(X,p)$. On the other
hand, if $\{x_n\}$ converges properly to $x$ in $X$, hence
\[
\lim_{n\rightarrow +\infty} p^s(x_n,x)=0.
\]
Thus, by definition of $p^s$ and $f$, one can find
\begin{equation}
\label{eq33}
\lim_{n\rightarrow +\infty} p^s(fx_n,fx)=0.
\end{equation}
Both convergences (\ref{eq32})-(\ref{eq33}) yield that $f$ is
continuous on $(X,p)$. Any $x,y\in X$ are comparable, so
for example we take $x\leq_X y$,  and then $p(x,x)=p(x,y)$, so
$y\leq x$. Since $f(y)\leq f(x)$, so $f(x)\leq_X f(y)$, giving that $f$ is monotone non-decreasing with respect to $\leq_X$. In particular, for any $x\leq_X y$, we have
\begin{equation}
\label{eq13}
p(x,y)=x,\quad p(fx,fy)=f(x)=\frac{1}{2}x.
\end{equation}
Let us take $\psi : [0,+\infty[\rightarrow [0,+\infty[$ such that $\psi(t)=\frac{1}{4}t$. We have  for any $x\in X$, $\frac{1}{2}x\leq x-\frac{1}{4}x$. Consequently, we get for any $x\leq_X y$, thanks to this and (\ref{eq13})
\[
p(fx,fy)\leq p(x,y)-\psi(p(x,y),
\]
that is (\ref{contraction1}) holds. All the hypotheses of Theorem \ref{main-thm2} are satisfied, so $f$
has a unique fixed point in $X$, which is  $u=0$.\\
\end{example}

{\bf Acknowledgements:} The author thanks the referees for their kind comments and suggestions to improve this paper.

\vspace{0.2cm}

\noindent Hassen Aydi:\newline Universit\'e de Monastir.\newline
Institut Sup\'erieur d'Informatique de Mahdia. Route de R\'ejiche,
Km 4, BP 35, Mahdia 5121, Tunisie.\newline Email-address:
hassen.aydi@isima.rnu.tn\newline


\begin{thebibliography}{99}
%\bibitem{ACSV} H. Aydi, L.B. \'Ciri\'c, B. Samet, G. Vetro, \textit{Common fixed points results involving
%generlized contractions on a partial metric spaces and homotopy
%results}, submitted.

\bibitem{ALD}
I. Altun and G. Durmaz, \textit{Some fixed point theorems on ordered cone metric spaces}, Rend. Circ. Mat. Palermo. 58 (2009), 319–325.

\bibitem{AE}
I. Altun and A. Erduran, \textit{Fixed point theorems for monontone mappings on partial metric spaces}, Fixed Point Theory Appl. 2011 (2011), Article ID 508730.

\bibitem{AS}
I. Altun and H. Simsek, \textit{Some fixed point theorems on ordered metric spaces and application}, Fixed Point Theory Appl. 2010 (2010), Article ID 621469.


\bibitem{ASS}
I. Altun, F. Sola and H. Simsek, \textit{Generalized contractions on partial metric spaces}, Topology Appl. 157 (18) (2010), 2278-2785.


\bibitem{beg}
I. Beg and A. R. Butt, \textit{Coupled fixed points of set valued mappings in partially ordered metric spaces}, J. Nonlinear Sci. Appl. 3 (3) (2010), 179-185.


\bibitem{BL}
T. Gnana Bhaskar and V. Lakshmikantham, \textit{Fixed point theorems in partially ordered metric spaces and applications}, Nonlinear Anal. 65 (2006), 1379-1393.

\bibitem{ciric}
Lj. \'Ciri\'c, N. Caki\'c, M. Rajovi\'c and J. Sheok Ume, \textit{Monotone generalized nonlinear contractions in partially ordered metric spaces}, Fixed Point Theory Appl. 2008 (2008), Article ID 131294.

\bibitem{HS}
J. Harjani and K. Sadarangani, \textit{Fixed point theorems for weakly contractive mappings in partially ordered sets}, Nonlinear Anal. 71 (2009), 3403-3410.

\bibitem{lak}
V. Lakshmikantham and Lj. \'Ciri\'c, \textit{Coupled fixed point theorems for nonlinear contractions in partially ordered metric spaces},
 Nonlinear Anal. 70 (2009), 4341-4349.

\bibitem{Ma2}
S. G. Matthews, \textit{Partial metric topology, in: Proc. 8th Summer Conference on General Topology and Applications}, in: Ann. New York Acad. Sci. 728 (1994), 183-197.


\bibitem{NL}
J. J. Nieto and  R. R. L\'opez, \textit{Contractive mapping theorems in partially ordered sets and applications to ordinary differential equations}, Order. 22 (2005), 223-239.

\bibitem{NaSa}
H. K. Nashine and B. Samet, \textit{Fixed point results for mappings satisfying $(\psi,\varphi)$-weakly contraction in partially ordered metri spaces}, Nonlinear Analysis (2010), doi: 10.1016/j.na.2010.11.024.


\bibitem{O1}
S. J. O'Neill, \textit{Two topologies are better than one}, Tech. report, University of Warwick, Coventry, UK, http://www.dcs.warwick.ac.uk/reports/283.html, 1995.

\bibitem{O2}
S. J. O'Neill, \textit{Partial metrics, valuations and domain theory}, in: Proc. 11th Summer Conference on General Topology and Applications, in: Ann. New York Acad. Sci. 806 (1996) 304-315.

\bibitem{OP}
D. O'Regan and A. Petrusel, \textit{Fixed point theorems for generalized contractions in ordered metric spaces}, J. Math. Anal. Appl. 341 (2008), 1241-1252.

\bibitem{OV}
S. Oltra and O. Valero, \textit{Banach's fixed point theorem for partial metric spaces}, Rend. Istit. Mat. Univ. Trieste. 36 (2004), 17-26.

\bibitem{RR}
A. C. M. Ran and M. C. B. Reurings, \textit{A fixed point theorem in partially ordered sets
and some applications to matrix equations}, Proc. Amer. Math. Soc. 132 (5) (2003), 1435-1443.

\bibitem{Ro}
S. Romaguera, \textit{A Kirk type characterization of completeness for partial metric spaces}, Fixed Point Theory Appl. 2010 (2010), Article ID 493298.

\bibitem{Ro1}
S. Romaguera and M. Schellekens, \textit{Partial metric monoids and semivaluation spaces},  Topology Appl. 153 (5-6) (2005), 948-962.

\bibitem{Ro2}
S. Romaguera and O. Valero, \textit{A quantitative computational model for complete partialmetric spaces via formal balls}, Math. Structures Comput. Sci. 19 (3) (2009), 541-563.

\bibitem{S}
B. Samet, \textit{Coupled fixed point theorems for a generalized Meir-Keeler contraction
in partially ordered metric spaces}, Nonlinear Anal. 72 (2010), 4508-4517.

\bibitem{S1}
M.P. Schellekens, \textit{The correspondence between partial metrics and semivaluations}, Theoret. Comput. Sci. 315 (2004), 135-149.

\bibitem{V}
O. Valero, \textit{On Banach fixed point theorems for partial metric spaces}, Appl. Gen. Topol. 6 (2) (2005), 229-240.

\end{thebibliography}
\end{document}